\theoremstyle{plain}
\newcommand{\E}{\mathbb E}
\def\P{{\mathbb P}}
\newenvironment{remark}[1][Remark]{\begin{trivlist}
\item[\hskip \labelsep {\bf Remark}]}{\end{trivlist}}
\newtheorem{theorem}{Theorem}[section]
\newtheorem{corollary}[theorem]{Corollary}
\newtheorem{proposition}[theorem]{Proposition}
\theoremstyle{definition}
\title[With or without replacement?]{With or without replacement?\\Sampling uncertainty in Shepp's urn scheme}
\author[Kristoffer Glover]{Kristoffer Glover}
\subjclass[2010]{Primary 60G35; Secondary 90C39}
\keywords{Shepp's urn scheme; optimal stopping; Brownian bridges; parameter uncertainty; sampling without replacement}
\address{University of Technology Sydney, P.O. Box 123, Broadway, NSW 2007, Australia.}
\date{\today}
\begin{document}

\maketitle

\begin{abstract}
We introduce a variant of Shepp's classical urn problem in which the optimal stopper does not know whether sampling from the urn is done with or without replacement.
By considering the problem's continuous-time analog, we provide bounds on the value function and in the case of a balanced urn (with an equal number of each ball type), an explicit solution is found.
Surprisingly, the optimal strategy for the balanced urn is the same as in the classical urn problem.
However, the expected value upon stopping is lower due to the additional uncertainty present.
%We consider the problem of optimally stopping a Brownian bridge when the existence of the pinning force is not known with certainty.
%In other words, the process is thought to be either a Brownian bridge or a standard Brownian motion and the optimal stopper updates their belief (via Bayes) through sequential observation of the process.
%The problem can be seen as a continuous-time analog of Shepp's classical urn scheme, but where the optimal stopper does not know whether the balls being drawn from the urn are replaced or not.
%Bounds on the value function are provided and, in the case of a balanced urn (with an equal number of each ball type), an explicit solution is found.
%Surprisingly, the optimal strategy for the balanced urn is the same as the optimal strategy for Shepp's classical problem (in which replacement is known with certainty).
%However, the expected value upon stopping is lower due to the additional uncertainty present.
\end{abstract}

\maketitle

\section{Introduction}
Consider the following discrete optimal stopping problem as first described in \cite{S} by Shepp.
An urn initially contains $m$ balls worth $-\$1$ each and $p$ balls worth $+\$1$ each, where $m$ and $p$ are positive integers known \emph{a priori}.
Balls are randomly sampled (one at a time and without replacement) and their value is added to a running total.
Before any draw, the optimal stopper can choose to stop sampling and receive the cumulative sum up to that point.
The goal is to find the stopping rule which maximises the expected payout from a given $(m,p)$-urn.

The urn scheme described above was originally formulated in relation to the classical optimal stopping problem of maximising the average value of a sequence of independent and identically distributed random variables (see \cite{Br,CR,Dv}, among others).
The scheme has also been considered in relation to numerous other problems considered in the subsequent literature.
For example, in \cite{CGK}, the authors consider an extension in which the stopper exhibits risk-aversion (modelled as the limited ability to endure negative fluctuations in the running total).
An extension in which the stopper is able to draw more than one ball at a time is also considered in \cite{DK}.
Related to the current note, \cite{CZLW} (and subsequently \cite{Hu}) consider the urn problem where the composition of balls in the urn is not known with certainty (i.e., where $p+m$ is known but $p$ is not).

The aim of the present note is to introduce a variant of Shepp's urn problem in which the sampling procedure used is not known with certainty.
Specifically, while the result of each draw is observable, we assume that the optimal stopper is uncertain about whether or not the balls are removed from the urn after sampling.
In other words, whether sampling is done \emph{with or without replacement}.
Since the probability of sampling a given ball type is different under the two different sampling procedures, sequentially observing the random draws will reveal statistical information about the true procedure being used.
Hence, we adopt a Bayesian approach and assume the optimal stopper has a prior belief of $\pi$ that the samples are not being replaced.
They then, sequentially, update this belief (via Bayes) after each random draw.
Since the goal is to maximise the expected payout upon stopping, any stopping rule must account for the expected learning that will occur over time.

Shepp demonstrated that the optimal rule for the original problem is of a threshold type.
In particular, denoting by $\mathcal{C}$ the set of all urns with a positive expected value (upon stopping optimally), then $\mathcal{C}=\{(m,p)\,|\,m\leq\beta(p)\}$, where $\beta(p)$ is a sequence of unique constants dependent on $p$ (which must be computed via recursive methods, cf.~\cite{Bo2}). %\marginal{Maybe say something here about $\beta(p)$ are hard to calculated in discrete time, especially for large $m+p$.}
It is thus optimal to draw a ball if there are sufficiently many $p$ balls relative to $m$ balls (or sufficiently few $m$ balls relative to $p$ balls).
Intuitively, $\beta(p)>p$ and hence, a ball should not be sampled when the current state of the urn satisfies $p-m\leq p-\beta(p)<0$.
Put differently, the optimal stopper should stop sampling when the running total exceeds some critical (positive) threshold, dependent on the current state of the urn.

Of particularly importance to the current note, Shepp \cite[][p.~1001]{S} also connects the urn problem (when the sampling method was known) with the continuous-time problem of optimally stopping a Brownian bridge.
Specifically, via an appropriate scaling, the running total (cumulative sum) process was shown to converge to a Brownian bridge which starts at zero (at $t=0$) and pins to some location $a$ (at $t=1$).
Importantly, the known constant $a$ depends on the initial values of $m$ and $p$, with $a=\tfrac{m-p}{\sqrt{m+p}}$.
Hence, the sign of the pinning location depends on the relative abundance of $m$- and $p$-balls in the urn.
%(For more on the diffusive limit of such urn problems we refer the interest reader to \cite{CGK}, in particular Theorem 9.)
The continuous-time problem is shown by Shepp \cite{S} to admit a closed-form solution and the optimal stopping strategy found, once more, to be of threshold type---being the first time that the Brownian bridge exceeds some time-dependent boundary, given by $a+\alpha\sqrt{1-t}$ with $\alpha\approx 0.83992$.

Given the success and closed-form nature of such continuous-time approximations, we choose not to tackle the discrete version of our problem directly, instead formulating and solving the continuous-time analog.
In such a setting, uncertainty about the true sampling procedure manifests itself in uncertainty about the drift of the underlying (cumulative sum) process.
In particular, the process is believed to be either a Brownian bridge pinning to $a$ (if sampling is done \emph{without} replacement) or a Brownian motion with drift $a$ (if sampling is done \emph{with} replacement), and the optimal stopper must learn about which it is over time.
Despite this additional uncertainty, we find that the problem has a closed-form solution when $a=0$ and, remarkably, the optimal strategy is found to coincide with the optimal strategy of the classical problem (where the sampling procedure/drift is known with certainty).
The expected payout, however, is lower due to the additional uncertainty present.
When $a\neq 0$, the problem is more complicated and a richer solution structure emerges (with multiple optimal stopping boundaries possible).

This note therefore contributes to the literature on both optimally stopping a Brownian bridge (e.g., \cite{S,F,EW,EV2,BCSY,AM,ES}) and optimal stopping in the presence of incomplete information (e.g., \cite{EL,EV,EV3,Ga,G,JP1,JP2}).
We also note that Brownian bridges have found many applications in the field of finance.
For example, they have been used to model the so-called stock pinning effect (see \cite{AL}), and the dynamics of certain arbitrage opportunities (see \cite{BS1,LL}).
In both settings, the existence of the underlying economic force (creating the pinning) is more often than not uncertain.
Hence, the additional uncertainty considered in this note may find application in more realistic modelling of these market dynamics.

The rest of this note is structured as follows:
We start in Section \ref{sec:connect} by commenting further on the connection between the discrete urn problem and the continuous-time analog.
In Section \ref{sec:form}, we formulate the continuous-time problem, making clear our informational assumptions.
Upper and lower bounds on the value function are presented in Section \ref{sec:a0}, along with the explicit solution to the problem in the case where $a=0$.
We conclude in Section \ref{sec:con} with a brief discussion of the case where $a$ is nonzero.

\section{Connecting the urn problem to Brownian bridges/motion}\label{sec:connect}
1. Let $\epsilon_i$, for $i=1,\ldots, m+p$, denote the results of sampling from a given $(m,p)$-urn, with $\epsilon_i=-1$ for an $m$-ball and $\epsilon_i=1$ for a $p$-ball.
The partial sum after $n$ draws is thus $X_n=\sum_{i=1}^{n}\epsilon_i$, with $X_0=0$.
It is well known that the discrete process $\{X_n\}_{n=0}^{m+p}$ can be approximated as a continuous-time diffusion process if we let $m$ and $p$ tend to infinity in an appropriate way.
The resulting diffusion, however, will depend on whether sampling is done with or without replacement.
Fixing $m$ and $p$, we define, for $0\leq n\leq m+p$ and $n<(m+p)t\leq n+1$,
\begin{equation}\label{eqn:process}
  X_{m,p}(t)=\frac{X_n}{\sqrt{m+p}},\quad 0\leq t\leq 1.
\end{equation}

If sampling is done \emph{without} replacement then for $n=m+p$ (after all balls have been sampled) we have
\begin{equation}
  X_{m,p}(1)=\frac{p-m}{\sqrt{m+p}}=:a.
\end{equation}
Hence, the final value (at $t=1$) is known with certainty to be the constant $a$.
In this case, it is also clear that the samples $\epsilon_i$ are not \emph{iid}.
However, Shepp demonstrated that, if $a$ is fixed, the process $X_{m,p}(t)$ converges in distribution as $p\to\infty$ to a Brownian bridge process pinning to the point $a$ at $t=1$ (see \cite[][p.~1001]{S}).% and cf. \cite{Li}).

\vspace{4pt}

2. On the other hand, if sampling is done \emph{with} replacement, then the samples $\epsilon_i$ are \emph{iid}, and the process $X_{m,p}(t)$ in \eqref{eqn:process} can be seen to converge in distribution to a Brownian motion (with drift), via Donsker's theorem.
We also note that \emph{with} replacement, more than $m+p$ balls can be sampled.
Indeed, sampling could continue indefinitely if each sampled ball were replaced.
However, after $m+p$ balls have been sampled the true nature of the sampling procedure will be revealed---since there will either be no balls left or another sample is produced.
In our modified urn problem we therefore make the natural assumption that stopping must occur before more than $m+p$ balls have been sampled.\footnote{If the optimal stopper were allowed to continue beyond $m+p$ samples, then the stopper would never stop for $p>m$ ($a>0$) in the sampling-with-replacement scenario (since the cumulative sum is expected to increase indefinitely). It would then also be optimal never to stop \emph{before} $m+p$ balls are sampled (for $\pi<1$ at least) due to the unbounded payoff expected after $m+p$ balls. On the other hand, if $p\leq m$ ($a\leq 0$), the stopper would stop at $m+p$ balls in all scenarios since the cumulative sum process after $m+p$ balls is a supermartingale regardless of whether sampling with or without replacement had been revealed. Thus, the solution to the stopping problems with restricted and unrestricted stopping times would coincide for the case where $a\leq 0$. To avoid the degeneracy of the problem for $a>0$, i.e.~to guarantee a finite value, we chose to restrict the set of admissible stopping times to $n\leq m+p$.}

To apply Donsker's theorem, we note that the probability of drawing a given ball type (with replacement) is constant and given by $p/(m+p)$ for a positive ball and $m/(m+p)$ for a negative ball.
Therefore, $\E[\epsilon_i]=(p-m)/(m+p)=a/\sqrt{m+p}$ and $\textrm{Var}(\epsilon_i)=1-a^2/(m+p)$.
This allows us to rewrite \eqref{eqn:process} as
\begin{equation}\label{eqn:donsk}
  X_{m,p}(t)=\frac{an}{m+p}+\sqrt{1-\tfrac{a^2}{m+p}}\left(\frac{\sum_{i=1}^{n}\widehat{\epsilon}_{i}}{\sqrt{m+p}}\right),
\end{equation}
where $\widehat{\epsilon}_i$ are now standardized random variables (with zero mean and unit variance).
Since we are restricting our attention to $n\leq m+p$, we can once more fix $m$ and $p$ and define $n<(m+p)t\leq n+1$.
Letting $p\to\infty$, the process $X_{m,p}(t)$ in \eqref{eqn:donsk} thus converges to
\begin{equation}\label{eqn:BM}
  X_t=at+B_t, \quad 0\leq t\leq 1,
\end{equation}
where $(B_t)_{0\leq t\leq 1}$ is a standard Brownian motion (cf.~\cite{Do}).
Note that the drift in \eqref{eqn:BM} coincides with the pinning point of the Brownian bridge in the case without replacement.

With this necessary connection in place, we now proceed to formulate the continuous-time stopping problem corresponding to our variant of Shepp's urn scheme.
%The limiting arguments above reveal the interesting fact that if $m\neq p$ (hence $a\neq 0$) then the continuous-time approximation of the cumulative sum process with replacement has a different diffusion coefficient than the process without replacement.
%An optimal stopper with access to continuous observations of the process would therefore be able to tell which of the two processes was being observed, instantaneously and with certainty, by considering the observed quadratic variation.
%When $m=p$ ($a=0$) however, the diffusion coefficients coincide and the two processes cannot be distinguished via their quadratic variation.
%Hence the continuous-time approximation of the urn problem is still useful when $a=0$ and so we shall make this assumption in our subsequent analysis.

%\begin{figure}[htp!]\centering
%  \psfrag{t}{$t$}
%  \psfrag{x}{$x$}
%  \includegraphics[width=0.5\textwidth]{sim.eps}
%  \caption{A typical sample path of the Brownian bridge and the corresponding Brownian motion (for the same $\omega$). The dashed line = the optimal stopping boundary $b(t)$.} \label{fig:sim}
%\end{figure}

\section{Problem formulation and learning assumptions}\label{sec:form}
1. Let $X=(X_t)_{t\geq 0}$ denote an observable stochastic process that is believed by an optimal stopper to be either a Brownian motion with known drift $a$, or a Brownian bridge that pins to $a$ at $t=1$.
Adopting a Bayesian approach, we also assume that the optimal stopper has an initial belief of $\pi$ that the true process is a Brownian bridge (and hence a belief of $1-\pi$ that it is a Brownian motion).

This information structure can be realised on a probability space $(\Omega,\mathcal{F},\P_{\pi})$ where the probability measure $\P_{\pi}$ has the following structure
\begin{equation}\label{eqn:space}
  \P_{\pi}=(1-\pi)\P_0+\pi\P_1,\quad \textrm{for } \pi\in[0,1],
\end{equation}
where $\P_0$ is the probability measure under which the process $X$ is the Brownian motion and $\P_1$ is the probability measure under which the process $X$ is the Brownian bridge (cf.~\cite[][Chapter VI, Section 21]{PS}).
More formally, we can introduce an unobservable random variable $\theta$ taking values 0 or 1 with probability $1-\pi$ and $\pi$ under $\P_\pi$, respectively.
Thus, the process $X$ solves the following stochastic differential equation
\begin{equation}\label{eqn:bridge}
  dX_t=\bigl[(1-\theta)a+\theta\bigl(\tfrac{a-X_t}{1-t}\bigr)\bigr]dt+dB_t, \quad X_0=0,%\kappa\in\R
\end{equation}
where $B=(B_t)_{t\geq 0}$ is a standard Brownian motion, independent of $\theta$ under $\P_{\pi}$.
%We have also allowed the process to start at an arbitrary point $\kappa$ under $\P_{\pi}$.

%\vspace{4pt}
%
%\begin{remark}
%Since the process is allowed to start at an arbitrary point $\kappa$, our assumption that the Brownian bridge pins to zero at $t=1$ can be taken without loss of generality.
%Specifically, if instead the bridge pinned to the value $a$ at $t=T$, the scaling $t\to t/T$ and $X\to(X-a)/\sqrt{T}$ could be taken to yield pinning to zero at $t=1$, but with a starting value of $(\kappa-a)/\sqrt{T}$.
%\end{remark}

%\vspace{4pt}
%
%\begin{remark}
%For a known and given $\alpha\neq 0$ or 1, the process \eqref{eqn:bridge} is often called an $\alpha$-Brownian bridge, which, to the best of our knowledge, was first introduced in \cite{BS} in the context of arbitrage modelling.
%Numerous authors have subsequently investigated its properties in detail, see for example \cite{BK,BP,BP2,M} and the references therein.%\marginal{Comment on the observation made by \cite{G-1} on the discontinuity at $\alpha=0$.}
%\end{remark}

\vspace{4pt}

2. The problem under investigation is to find the optimal stopping strategy that maximises the expected value of $X$ upon stopping, i.e.
\begin{equation}\label{eqn:prob}
  V(\pi)=\sup_{0\leq\tau\leq 1}\E_{\pi}\left[X_{\tau}\right],\quad \textrm{for } \pi\in[0,1].
\end{equation}
Recall that the time horizon of the optimal stopping problem in \eqref{eqn:prob} is set to one, since the uncertainty about the nature of the process is fully revealed at $t=1$ (it either pins to $a$ or it does not).

If the process was known to be a Brownian bridge then it would be evident from \eqref{eqn:prob} that $V\geq a$, since simply waiting until $t=1$ would yield a value of $a$ with certainty.
However, uncertainty about $\theta$ introduces additional uncertainty in the terminal payoff, since the value received at $t=1$ could be less than $a$ if the true process was actually a Brownian motion.

\vspace{4pt}

%\begin{remark}
%If the optimal stopper were allowed to continue past $t=1$, then the value function would either be $a$ (if the process was a Brownian bridge) or infinite (if it was in fact a Brownian motion).
%The infinite value can be seen by noting that a standard Brownian motion is not uniformly integrable and so $\sup_{\tau>1}\E[B_\tau]=\infty$.
%\end{remark}
%
%\vspace{4pt}

\begin{remark}
The problem described above is related to the problem studied in \cite{EV2}, in which the underlying process is known to be a Brownian bridge, but for which the \emph{location} of the pinning point is unknown.
Specifically, if the process defined in \eqref{eqn:bridge} was a standard Brownian motion then the distribution of its expected location at $t=1$ would be normal, i.e.~$X_1\sim\mathcal{N}(a,1)$.
On the other hand, if the process was a Brownian bridge pinning to $a$ at $t=1$, then the distribution of its expected location at $t=1$ would be a point mass, i.e.~$X_1\sim\delta_a$ (where $\delta_a$ denotes the Dirac delta).
Hence, setting a prior on the location of the pinning point in \cite{EV2} to $\mu=\pi\delta_a+(1-\pi)\mathcal{N}(a,1)$ is equivalent to the problem formulated in this note.
\end{remark}

\vspace{4pt}

3. To account for the uncertainty about $\theta$ in \eqref{eqn:bridge}, we define the \emph{posterior probability process}
\begin{equation}
  \Pi_t:=\P_{\pi}(\theta=1\,|\,\mathcal{F}_t^X),\quad \textrm{for }t\geq 0,
\end{equation}
which represents the belief that the process will pin at $t=1$ and importantly how it is continually updated over time through observations of the process $X$.
To determine the dynamics of the process $\Pi=(\Pi_t)_{t\geq 0}$, we appeal to well-known results from stochastic filtering theory (see \cite[][Theorem 9.1]{LS} or \cite[][Section 2]{JP1}), namely that, for $t\geq 0$,
\begin{align}
  dX_t &= \bigl[(1-\Pi_t)a+\Pi_t\bigl(\tfrac{a-X_t}{1-t}\bigr)\bigr]dt+d\bar{B}_t, \quad X_0=0, \label{eqn:X} \\
  d\Pi_t &= \rho(t,X_t)\Pi_t(1-\Pi_t)d\bar{B}_t, \quad \Pi_0=\pi, \label{eqn:Pi}
\end{align}
where $\bar{B}=(\bar{B}_t)_{t\geq 0}$ is a $\P_{\pi}$-Brownian motion called the \emph{innovation process} and $\rho$ denotes the \emph{signal-to-noise} ratio that is defined as
\begin{equation}
  \rho(t,X_t):=\frac{a-X_t}{1-t}-a.
\end{equation}
While the payoff in \eqref{eqn:prob} is only dependent on $X$ (not $\Pi$), the drift of $X$ in \eqref{eqn:X} contains $\Pi$.
Therefore, at first blush, it would appear that the optimal stopping problem is two-dimensional (in $X$ and $\Pi$).
However, since both $X$ and $\Pi$ are driven by the same Brownian motion ($\bar{B}$), the problem can, in fact, be reduced to only one spacial variable (either $X$ or $\Pi$) by identifying a (time-dependent) mapping between $X_t$ and $\Pi_t$.
In what follows we will formulate the problem in terms of the original process $X$, since this facilitates a more transparent comparison to the case where the process is known to pin with certainty.
%In addition, we will also perform a change of measure to remove the $\Pi$ dependence from the dynamics of $X$.

\vspace{4pt}

4. To establish the mapping between $X_t$ and $\Pi_t$ we have the following result.

\begin{proposition}\label{prop:mapping}
Given the processes $X=(X_t)_{t\geq 0}$ and $\Pi=(\Pi_t)_{t\geq 0}$ defined by \eqref{eqn:X} and \eqref{eqn:Pi}, respectively, the following identity holds,
\begin{equation}\label{eqn:link}
  \frac{\Pi_t}{1-\Pi_t}=\frac{\pi}{1-\pi}L^a(t,X_t), \,\,\textrm{ with }\,\, L^a(t,x):=\frac{1}{\sqrt{1-t}}\exp\Bigl(-\frac{1}{2}a^2-\frac{(x-at)^{2}}{2(1-t)}\Bigr),
\end{equation}
for $t\in[0,1)$.
\end{proposition}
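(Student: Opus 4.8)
The plan is to read the claimed identity as the Bayes formula for the posterior odds of $\theta$, and then to reduce everything to one explicit likelihood-ratio computation. Since $\P_\pi=(1-\pi)\P_0+\pi\P_1$ and the event $\{\theta=1\}$ carries precisely the $\P_1$-part of the observed path, conditioning on $\mathcal{F}_t^X$ and forming the odds gives
\[
  \frac{\Pi_t}{1-\Pi_t}=\frac{\pi}{1-\pi}\,\frac{d\P_1}{d\P_0}\Big|_{\mathcal{F}_t^X}.
\]
In this way the proposition is equivalent to showing that the restricted Radon--Nikodym derivative $\tfrac{d\P_1}{d\P_0}$ on $\mathcal{F}_t^X$ equals $L^a(t,X_t)$, so all the content sits in evaluating this likelihood ratio in closed form.

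For that evaluation I would use a disintegration argument, which I expect to be the cleanest route. Because conditioning any drifted Brownian motion on its terminal value erases the drift, the bridge law $\P_1$ coincides with $\P_0(\,\cdot\mid X_1=a)$, where under $\P_0$ one has $X_t=at+B_t$. The associated Doob $h$-transform then expresses the likelihood ratio on $\mathcal{F}_t^X$ as the quotient of the $\P_0$-transition density from $(t,X_t)$ to the pinned endpoint $(1,a)$ and the $\P_0$-marginal density of $X_1\sim\mathcal{N}(a,1)$. Both are explicit Gaussians, and the simplification $a-X_t-a(1-t)=at-X_t$ makes the quadratic in the numerator collapse to $(X_t-at)^2$, producing an exponential of the form $\tfrac{1}{\sqrt{1-t}}\exp\!\left(-\tfrac{(X_t-at)^2}{2(1-t)}\right)$. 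A second, more computational route is to change measure from Wiener measure to each of $\P_0$ and $\P_1$ by Girsanov and divide; there the only delicate object is the stochastic integral $\int_0^t\tfrac{a-X_s}{1-s}\,dX_s$, which I would evaluate by applying It\^o's formula to $(a-X_s)^2/2(1-s)$ so that the resulting $ds$-integral exactly cancels the quadratic Girsanov correction, leaving the same Gaussian expression together with a boundary term from $s=0$.

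I expect the main obstacle to be precisely this likelihood-ratio evaluation. In the Girsanov route it is the careful bookkeeping of the It\^o and boundary terms; in the disintegration route it is justifying the bridge-as-conditioned-process identity and the $h$-transform rigorously as a statement about the path measures restricted to $\mathcal{F}_t^X$ (rather than only finite-dimensional marginals). Throughout, a convenient consistency check that pins down the Gaussian normalising constants is that $X_0=0$ should force the right-hand side to reduce to $\tfrac{\pi}{1-\pi}$, i.e.\ $L^a(0,0)=1$ and hence $\Pi_0=\pi$, in agreement with \eqref{eqn:Pi}.
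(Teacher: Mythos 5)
Your strategy is sound and genuinely different from the paper's. The paper never leaves the filtering side: it applies It\^o's formula to the functional $U_t=\ln\bigl(\Pi_t/(1-\Pi_t)\bigr)+aX_t-\tfrac{aX_t}{1-t}+\tfrac{X_t^2}{2(1-t)}$, chosen so that the $d\bar B$-terms coming from \eqref{eqn:X} and \eqref{eqn:Pi} cancel, and then integrates the resulting deterministic dynamics \eqref{eqn:dU}. You instead read \eqref{eqn:link} as Bayes' rule for the posterior odds and reduce the proposition to a single likelihood-ratio computation, $d\P_{1,t}/d\P_{0,t}$ on $\mathcal{F}^X_t$, evaluated either through the disintegration $\P_1=\P_0(\,\cdot\,|\,X_1=a)$ and the Doob $h$-transform, or as a quotient of two Girsanov densities. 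This reduction is legitimate and not circular: it is exactly the content of parts (i)--(ii) of Proposition \ref{prop:measure}, whose proof in the paper does not rely on Proposition \ref{prop:mapping}. Both of your evaluation routes do close, and they explain where the Gaussian kernel comes from rather than verifying it after the fact.

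However, there is a genuine problem you do not address: the expression your computation yields is \emph{not} the stated $L^a$. Both the $h$-transform quotient and the Girsanov quotient give $\tfrac{1}{\sqrt{1-t}}\exp\bigl(-\tfrac{(X_t-at)^2}{2(1-t)}\bigr)$, with no factor $e^{-a^2/2}$, whereas the stated $L^a$ carries one; your proposal silently treats these as the same. They are not, and your own consistency check refutes the stated formula: it gives $L^a(0,0)=e^{-a^2/2}\neq 1$, i.e.\ $\Pi_0\neq\pi$, whenever $a\neq 0$; equivalently $\E_0[L^a(t,X_t)]=e^{-a^2/2}\neq 1$, so the stated $L^a$ cannot be the Radon--Nikodym density asserted in \eqref{eqn:change2}. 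The fault lies in the paper, not in your calculation: in passing from \eqref{eqn:dU} to \eqref{eqn:U2} the integral $-\tfrac{a^2}{2}\int_0^t(1-s)^{-2}\,ds=-\tfrac{a^2}{2(1-t)}+\tfrac{a^2}{2}$ lost its constant $+\tfrac{a^2}{2}$ --- precisely the boundary term at $s=0$ that you flag in your Girsanov route --- and this omission manufactures the spurious $e^{-a^2/2}$. So a complete write-up of your argument must conclude that Proposition \ref{prop:mapping} as stated is false for $a\neq 0$ and holds with $L^a(t,x)=\tfrac{1}{\sqrt{1-t}}\exp\bigl(-\tfrac{(x-at)^2}{2(1-t)}\bigr)$. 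The discrepancy is a constant factor, equal to $1$ when $a=0$, so Theorem \ref{thm:solution} is unaffected; in \eqref{eqn:Piembed}, \eqref{eqn:Gpi} and \eqref{eqn:H} the factor merely rescales the prior odds $\pi/(1-\pi)$, so the qualitative discussion of the $a\neq 0$ case survives as well.
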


\begin{proof}
To establish the mapping we take advantage of the fact that both processes are driven by the same Brownian motion and define the process (cf.~Proposition 4 in \cite{JP2})%\marginal{Say more about where this mapping comes from here? We can also add a remark that the mapping works when $\alpha_0+\alpha_1=1$, however the present case is the only really interesting case in this parameter regime.}
\begin{equation}\label{eqn:U}
  U_t = \ln\Bigl(\frac{\Pi_t}{1-\Pi_t}\Bigr)+aX_t-\frac{aX_t}{1-t}+\frac{X_t^2}{2(1-t)},
\end{equation}
which, after applying It\^{o}'s formula, is seen to be of bounded variation with dynamics
\begin{equation}\label{eqn:dU}
  dU_t = \frac{1}{2}\Bigl[a^2-\frac{a^2}{(1-t)^2}+\frac{1}{1-t}\Bigr]dt, \,\textrm{ with } U_0=\ln\left(\pi/(1-\pi)\right).
\end{equation}
Thus, $U_t$ can be solved explicitly as
\begin{equation}\label{eqn:U2}
  U_t = \ln\left(\pi/(1-\pi)\right)+\frac{a^2}{2}t-\frac{a^2}{2(1-t)}-\ln\sqrt{1-t},
\end{equation}
and after combining \eqref{eqn:U} and \eqref{eqn:U2}, we obtain the desired result.
\end{proof}

%\begin{remark}
%From \eqref{eqn:link} we observe that the dependence on the initial point $\kappa$ can be removed from the problem via an appropriate shift in the prior belief $\pi$.
%Specifically, setting
%\begin{equation}
%  \frac{\widehat{\pi}}{1-\widehat{\pi}}=\frac{\pi e^{\frac{1}{2}\kappa^2}}{1-\pi}
%\end{equation}
%reduces the problem under $\kappa$ and $\pi$ to one in which $\kappa=0$ and the prior belief is equal to $\widehat{\pi}$ as defined above.
%Therefore, in what follows we will set $\kappa=0$ without loss of generality.
%\end{remark}

5. To solve the optimal stopping problem in \eqref{eqn:prob}, we will exploit various changes of measure.
In particular from $\P_{\pi}$ to $\P_0$ (under which the process $X$ is a standard Brownian motion with drift $a$) and then from $\P_0$ to $\P_1$ (under which $X$ is a Brownian bridge pinning to $a$).
In order to perform these measure changes, we have the following result that establishes the necessary Radon-Nikodym derivatives (cf.~Lemma 1 in \cite{JP1}).
\begin{proposition}\label{prop:measure}
Let $\P_{\pi,\tau}$ be the restriction of the measure $\P_\pi$ to $\mathcal{F}_{\tau}^X$ for $\pi\in[0,1]$.
We thus have the following:
\begin{equation}\label{eqn:change}
  \emph{(i)}\,\,\, \frac{d\P_{\pi,\tau}}{d\P_{1,\tau}}=\frac{\pi}{\Pi_{\tau}},\quad \emph{(ii)}\,\,\, \frac{d\P_{\pi,\tau}}{d\P_{0,\tau}}=\frac{1-\pi}{1-\Pi_{\tau}}
\end{equation}
and
\begin{equation}\label{eqn:change2}
  \emph{(iii)}\,\,\,\frac{d\P_{1,\tau}}{d\P_{0,\tau}}=\frac{1-\pi}{\pi}\frac{\Pi_{\tau}}{1-\Pi_{\tau}}=L^a(\tau,X_{\tau}),
\end{equation}
for all stopping times $\tau$ of $X$, where $L^a$ is given in \eqref{eqn:link}.
The process in \eqref{eqn:change2} is often referred to as the \emph{likelihood ratio process}.
\end{proposition}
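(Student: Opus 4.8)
The plan is to exploit the mixture structure \eqref{eqn:space} together with an abstract Bayes (filtering) argument, which reduces all three identities to elementary algebra once a single likelihood ratio process is in hand. Write $Z_\tau:=d\P_{1,\tau}/d\P_{0,\tau}$ for the Radon--Nikodym derivative of the bridge law with respect to the drifted Brownian motion law, restricted to $\mathcal{F}^X_\tau$; for $\tau<1$ both measures are equivalent to Wiener measure, so $Z_\tau$ is well defined. Restricting \eqref{eqn:space} to $\mathcal{F}^X_\tau$ gives $\P_{\pi,\tau}=(1-\pi)\P_{0,\tau}+\pi\P_{1,\tau}$, and since $\P_{1,\tau}\ll\P_{0,\tau}$ with density $Z_\tau$ we immediately obtain
\[
  \frac{d\P_{\pi,\tau}}{d\P_{0,\tau}}=(1-\pi)+\pi Z_\tau.
\]

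The next step is to identify the posterior $\Pi_\tau$ in terms of $Z_\tau$. Since $\Pi_\tau$ is by definition the $\P_\pi$-conditional expectation of $\mathbf 1_{\{\theta=1\}}$ given $\mathcal{F}^X_\tau$, we have $\E_\pi[G\,\mathbf 1_{\{\theta=1\}}]=\E_\pi[G\Pi_\tau]$ for every bounded $\mathcal{F}^X_\tau$-measurable $G$. Evaluating the left-hand side through the mixture gives $\pi\,\E_1[G]=\pi\,\E_0[G Z_\tau]$ (as $\theta=0$ $\P_0$-a.s.\ and $\theta=1$ $\P_1$-a.s.), while the right-hand side gives $\E_0\big[G\Pi_\tau((1-\pi)+\pi Z_\tau)\big]$; since $G$ is arbitrary this forces $\pi Z_\tau=\Pi_\tau((1-\pi)+\pi Z_\tau)$, i.e.
\[
  \Pi_\tau=\frac{\pi Z_\tau}{(1-\pi)+\pi Z_\tau},\qquad 1-\Pi_\tau=\frac{1-\pi}{(1-\pi)+\pi Z_\tau}.
\]
Part (i) is then immediate, since $\tfrac{1-\pi}{1-\Pi_\tau}=(1-\pi)+\pi Z_\tau=d\P_{\pi,\tau}/d\P_{0,\tau}$. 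Part (ii) follows from the chain rule $d\P_{\pi,\tau}/d\P_{1,\tau}=(d\P_{\pi,\tau}/d\P_{0,\tau})/Z_\tau=((1-\pi)+\pi Z_\tau)/Z_\tau$, which equals $\pi/\Pi_\tau$ after substituting the expression for $\Pi_\tau$. For part (iii), rearranging the Bayes formula gives $\Pi_\tau/(1-\Pi_\tau)=\tfrac{\pi}{1-\pi}Z_\tau$, i.e.\ $Z_\tau=\tfrac{1-\pi}{\pi}\,\Pi_\tau/(1-\Pi_\tau)$, and the explicit form $Z_\tau=L^a(\tau,X_\tau)$ is exactly the content of Proposition \ref{prop:mapping}. (Alternatively, (iii) can be checked directly: relative to Wiener measure, $\P_0$ carries the Cameron--Martin density $\exp(aX_t-\tfrac12 a^2 t)$ and $\P_1$ the Doob $h$-transform density $p_{1-t}(X_t,a)/p_1(0,a)$, with $p_s$ the Gaussian kernel, and their ratio yields the likelihood ratio in closed form.)

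The main obstacle is not the algebra but the measure-theoretic justification, specifically the passage from fixed times to stopping times and the degeneracy at $t=1$. For each fixed $t<1$ the laws $\P_0$ and $\P_1$ are equivalent on $\mathcal{F}^X_t$, and $(Z_t)_{t<1}$ is a strictly positive $\P_0$-martingale (a Girsanov exponential), as is the density $(1-\pi)+\pi Z_t$; the identities at a stopping time $\tau$ then follow by optional sampling, which must be justified (e.g.\ for $\tau$ bounded away from $1$, or via a localisation/limiting argument). The delicate point is that as $t\uparrow 1$ the bridge pins deterministically to $a$ while the drifted Brownian motion does not, so $\P_0$ and $\P_1$ become mutually singular and $L^a(t,X_t)$ degenerates (to $0$ or $\infty$ according to whether $X_t\to a$); hence (iii) is genuinely a statement for $\tau<1$, and one must ensure the stopping times under consideration respect this restriction.
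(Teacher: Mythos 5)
Your main argument is correct and is essentially the paper's own proof in different bookkeeping: the paper applies the standard Bayes/mixture rule for conditional probabilities under \eqref{eqn:space} to the events $\{\theta=1\}$ and $\{\theta=0\}$, obtaining (i)--(ii) directly and then (iii) by division together with Proposition \ref{prop:mapping}; you instead introduce $Z_\tau=d\P_{1,\tau}/d\P_{0,\tau}$, note $d\P_{\pi,\tau}/d\P_{0,\tau}=(1-\pi)+\pi Z_\tau$, and recover the same Bayes identity $\Pi_\tau=\pi Z_\tau/\bigl((1-\pi)+\pi Z_\tau\bigr)$ by testing against bounded $\mathcal{F}^X_\tau$-measurable variables, after which the algebra and the appeal to Proposition \ref{prop:mapping} coincide with the paper's. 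Two of your additions are worth keeping. First, the restriction to $\tau<1$ is not pedantry: on $\mathcal{F}^X_1$ the laws $\P_{0,1}$ and $\P_{1,1}$ are mutually singular, so the proposition's ``for all stopping times'' is literally too strong, and none of (i)--(iii) hold in the ordinary Radon--Nikodym sense on the event $\{\tau=1\}$. Second, and more importantly, if you actually carry out your parenthetical Cameron--Martin/$h$-transform check, with $p_s(x,y)=(2\pi s)^{-1/2}e^{-(y-x)^2/(2s)}$, you find
\begin{equation*}
  \frac{d\P_{1,t}}{d\P_{0,t}}
  =\frac{p_{1-t}(X_t,a)/p_1(0,a)}{\exp\bigl(aX_t-\tfrac12 a^2t\bigr)}
  =\frac{1}{\sqrt{1-t}}\exp\Bigl(-\frac{(X_t-at)^2}{2(1-t)}\Bigr),
\end{equation*}
which equals $1$ at $t=0$ (as any density on the trivial $\sigma$-algebra must) but differs from the paper's $L^a$ by the constant factor $e^{a^2/2}$. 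The discrepancy is the paper's, not yours: \eqref{eqn:U2} drops the constant of integration $a^2/2$ arising from $\tfrac12\int_0^t a^2(1-s)^{-2}ds$, so \eqref{eqn:link} is off by this constant (indeed $L^a(0,0)=e^{-a^2/2}\neq 1$, contradicting $\Pi_0=\pi$), and hence identity (iii) as written carries a spurious $e^{-a^2/2}$; this is harmless in the $a=0$ case that drives Theorem \ref{thm:solution}, but matters for $a\neq 0$. Since your main route imports \eqref{eqn:link} while your direct check does not, the two are mutually inconsistent for $a\neq 0$; the direct check is the correct one and deserves to be promoted from a parenthesis to the proof.
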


\begin{proof}
A standard rule for Radon-Nikodym derivatives under \eqref{eqn:space} gives
\begin{align}
  \Pi_\tau & = \P_\pi(\theta=1\,|\,\mathcal{F}_\tau^X) \nonumber \\
  &=(1-\pi)\P_0(\theta=1\,|\,\mathcal{F}_\tau^X)\frac{d\P_{0,\tau}}{d\P_{\pi,\tau}}+\pi\P_1(\theta=1\,|\,\mathcal{F}_\tau^X)\frac{d\P_{1,\tau}}{d\P_{\pi,\tau}}=\pi\frac{d\P_{1,\tau}}{d\P_{\pi,\tau}}\label{eqn:rd1}
\end{align}
for any $\tau$ and $\pi$, yielding identity (i).
Similar arguments show that
\begin{align}
  1-\Pi_\tau &= \P_\pi(\theta=0\,|\,\mathcal{F}_\tau^X) \nonumber \\
  &= (1-\pi)\P_0(\theta=0\,|\,\mathcal{F}_\tau^X)\frac{d\P_{0,\tau}}{d\P_{\pi,\tau}}+\pi\P_1(\theta=0\,|\,\mathcal{F}_\tau^X)\frac{d\P_{1,\tau}}{d\P_{\pi,\tau}}=(1-\pi)\frac{d\P_{0,\tau}}{d\P_{\pi,\tau}},\label{eqn:rd2}
\end{align}
yielding (ii).
Using \eqref{eqn:rd1} and \eqref{eqn:rd2} together, and noting \eqref{eqn:link}, yield (iii).
\end{proof}

%\begin{proof}
%For any $A\in\mathcal{F}_{\tau}^{X}$ we have
%\begin{align}
%  \E_{\pi}[\P_{\pi}(\theta=1\,|\,\mathcal{F}_{\tau}^{X})I_A] &= \E_{\pi}[I_{\theta=1}I_A]=\P_{\pi}(A\,|\,\theta=1)\P_{\pi}(\theta=1) \nonumber \\
%  &=\P_1(A)\pi = \pi\E_1[I_A] = \E_{\pi}\Bigl[\pi\left.\frac{d\P_{\pi}}{d\P_{1}}\right|_{\mathcal{F}_{\tau}^{X}}I_A\Bigr] \nonumber
%\end{align}
%for any $\tau$ and $\pi$, yielding identity (i).
%Similar arguments show that
%\begin{align}
%  1-\Pi_\tau &= \P_\pi(\theta=0\,|\,\mathcal{F}_\tau^X) \nonumber \\
%  &= (1-\pi)\P_0(\theta=0\,|\,\mathcal{F}_\tau^X)\frac{d\P_{0,\tau}}{d\P_{\pi,\tau}}+\pi\P_1(\theta=0\,|\,\mathcal{F}_\tau^X)\frac{d\P_{1,\tau}}{d\P_{\pi,\tau}}=(1-\pi)\frac{d\P_{0,\tau}}{d\P_{\pi,\tau}},\label{eqn:rd2}
%\end{align}
%yielding (ii).
%Using \eqref{eqn:rd1} and \eqref{eqn:rd2} together, and noting \eqref{eqn:link}, yield (iii).
%\end{proof}

\vspace{4pt}

6. Next, we embed \eqref{eqn:prob} into a Markovian framework where the process $X$ starts at time $t$ with value $x$.
However, in doing so, we cannot forget that the optimal stopper's learning about the true nature of the underlying process started at time 0 with an initial belief of $\pi$ and with $X_0=0$.
To incorporate this information, we will exploit the mapping in \eqref{eqn:link} to calculate the stopper's updated belief \emph{should} the process reach $x$ at time $t$.
In other words, in our Markovian embedding, we must assume that the `initial' belief at time $t$ is not $\pi$ but $\Pi_t$ (which depends on $t$ and $x$).
More formally, the embedded optimal stopping problem becomes
\begin{equation}\label{eqn:prob2}
  V(t,x,\pi)=\sup_{0\leq\tau\leq 1-t}\E_{\pi}\bigl[X_{t+\tau}^{t,x}\bigr],
\end{equation}
where the processes $X=X^{t,x}$ and $\Pi$ are defined by
\begin{eqnarray}
  \left\{\begin{array}{ll}\label{eqn:Xembed}
  dX_{t+s}=\bigl(a+\Pi_{t+s}\rho(t+s,X_{t+s})\bigr)ds+d\bar{B}_{t+s}, & 0\leq s<1-t,\\
  X_t=x, & x\in\mathbb{R},\\
\end{array}\right.
\end{eqnarray}
and
\begin{eqnarray}
  \left\{\begin{array}{ll}\label{eqn:Piembed}
  d\Pi_{t+s}=\rho(t+s,X_{t+s})\Pi_{t+s}(1-\Pi_{t+s})d\bar{B}_{t+s}, & 0\leq s<1-t,\\
  \Pi_t=\tfrac{\pi}{1-\pi}L^a(t,x)/\bigl(1+\tfrac{\pi}{1-\pi}L^a(t,x)\bigr)=:\Pi(t,x,\pi), &\\
\end{array}\right.
\end{eqnarray}
respectively.
Note that the function $L^a$ is defined as in \eqref{eqn:link} and, with a slight abuse of notation, we have defined the function $\Pi(t,x,\pi)$ to be the `initial' value of $\Pi$ in the embedding (dependent on $t$, $x$, and $\pi$).
Note further that, since we are able to replace any dependence on $\Pi_{t+s}$ (for $s>0$) via the mapping in \eqref{eqn:link}, we no longer need to consider the dynamics for $\Pi$ in what follows (only the initial point $\Pi_t$).

\vspace{4pt}

7. Since its value will be used in our subsequent analysis, we conclude this section by reviewing the solution to the classical Brownian bridge problem which is known to pin to $a$ (at $t=1$) with certainty (i.e., when $\pi=1$).
In this case, the stopping problem in \eqref{eqn:prob2} has an explicit solution (cf.~\cite[][p.~175]{EW}) given by
\begin{equation}\label{eqn:classical}
  V^{a}_1(t,x):=\left\{\begin{array}{ll}
  a+\sqrt{2\pi(1-t)}(1-\alpha^2)\exp\Bigl(\frac{(x-a)^2}{2(1-t)}\Bigr)\Phi\Bigl(\frac{x-a}{\sqrt{1-t}}\Bigr), & x<b(t),\\
  x, & x\geq b(t),
\end{array}\right.
\end{equation}
for $t<1$ and $V^a_1(1,a)=a$.
The function $\Phi(y)$ denotes the standard cumulative normal distribution function and $b(t):=a+\alpha\sqrt{1-t}$ with $\alpha$ being the unique positive solution to
\begin{equation}\label{eqn:B}
  \sqrt{2\pi}(1-\alpha^2)e^{\frac{1}{2}\alpha^2}\Phi(\alpha)=\alpha,
\end{equation}
which is approximately 0.839924.
(Note that $\pi$ in \eqref{eqn:classical} and \eqref{eqn:B} denotes the universal constant and not the initial belief.)
Further, the optimal stopping strategy in this case is given by
\begin{equation}\label{eqn:taustar}
  \tau_b=\inf\{s\geq 0\,|\,X_{t+s}\geq b(t+s)\},\quad \textrm{for all } t<1.
\end{equation}

\section{Bounds on the value function and solution when $a=0$}\label{sec:a0}

1. As may be expected, the solution to \eqref{eqn:prob2} depends crucially on the value of $a$.
In fact, we find below that the problem is completely solvable in closed form when $a=0$ (corresponding to $m=p$).
For a nonzero value of $a$, the problem is more complicated and a richer solution structure emerges.
However, we are able to provide the following useful bounds on the value function in \eqref{eqn:prob2} for an arbitrary $a$.
Moreover, these bounds can be seen to coincide when $a=0$, yielding the explicit solution in this case.

\begin{proposition}\label{prop:upperbound} \emph{(Upper bound)}.
The value function defined in \eqref{eqn:prob2} satisfies
\begin{equation}\label{eqn:upper}
  V(t,x,\pi)\leq \bigl(1-\Pi(t,x,\pi)\bigr)\bigl(x+\max(a,0)\bigr)+\Pi(t,x,\pi)V_{1}^{a}(t,x),
\end{equation}
where $V_1^a$ is as given in \eqref{eqn:classical} and the function $\Pi$ is the updated belief conditional on the process reaching $x$ at time $t$, defined in \eqref{eqn:Piembed}.
\end{proposition}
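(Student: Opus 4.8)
The plan is to exploit the mixture structure of the measure $\P_\pi$ together with the subadditivity of the supremum. First I would recall from paragraph 6 that in the Markovian embedding the relevant prior at the point $(t,x)$ is the updated belief $\Pi(t,x,\pi)$ defined in \eqref{eqn:Piembed}, so that the measure governing the embedded process is, by \eqref{eqn:space}, the mixture $(1-\Pi(t,x,\pi))\P_0+\Pi(t,x,\pi)\P_1$, where under $\P_0$ the embedded process is a Brownian motion with drift $a$ started at $(t,x)$ and under $\P_1$ it is the Brownian bridge started at $(t,x)$ and pinning to $a$ at $t=1$. Consequently, for any stopping time $\tau$ with $0\le\tau\le1-t$, linearity of expectation gives the decomposition
\[\E_\pi[X_{t+\tau}^{t,x}]=(1-\Pi(t,x,\pi))\,\E_0[X_{t+\tau}^{t,x}]+\Pi(t,x,\pi)\,\E_1[X_{t+\tau}^{t,x}].\]

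Taking the supremum over $\tau$ and using that a supremum of a sum is bounded above by the sum of the suprema (here the weights $1-\Pi(t,x,\pi)$ and $\Pi(t,x,\pi)$ are nonnegative and do not depend on $\tau$), I would obtain
\[V(t,x,\pi)\le(1-\Pi(t,x,\pi))\sup_{0\le\tau\le1-t}\E_0[X_{t+\tau}^{t,x}]+\Pi(t,x,\pi)\sup_{0\le\tau\le1-t}\E_1[X_{t+\tau}^{t,x}].\]
The second supremum is, by definition, the value of the classical Brownian bridge problem, which equals $V_1^a(t,x)$ as recorded in \eqref{eqn:classical}.

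It then remains to control the first supremum, the optimal stopping value of a Brownian motion with drift $a$. Under $\P_0$ the embedded process satisfies $X_{t+s}^{t,x}=x+as+W_s$ for a $\P_0$-Brownian motion $W$, so that $X_{t+s}^{t,x}-as$ is a $\P_0$-martingale. By the optional sampling theorem, applied to the bounded stopping time $\tau\le1-t$, I get $\E_0[X_{t+\tau}^{t,x}]=x+a\,\E_0[\tau]$. When $a\ge0$ this is at most $x+a(1-t)\le x+a$, and when $a<0$ it is at most $x$; in either case $\E_0[X_{t+\tau}^{t,x}]\le x+\max(a,0)$, uniformly in $\tau$. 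Substituting these two estimates into the displayed inequality yields \eqref{eqn:upper}.

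The main point requiring care is the first step: justifying that the embedded expectation splits as a $\Pi(t,x,\pi)$-weighted mixture of the pure Brownian bridge and pure Brownian motion expectations. This rests on identifying $\Pi(t,x,\pi)$ as the correct Bayesian posterior weight at $(t,x)$---which is exactly the content of the mapping in Proposition \ref{prop:mapping} and the embedding in \eqref{eqn:Piembed}---and on the fact that, conditional on $\theta$ and on reaching $x$ at time $t$, the future evolution of $X$ is Markov with the stated (bridge or drifted-Brownian) law. Once this decomposition is in place, the remaining steps are routine.
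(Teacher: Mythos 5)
Your proof is correct, and it establishes the bound via the same underlying relaxation idea as the paper, though the execution differs enough to be worth comparing. The paper argues verbally through an information-revelation device: it supposes $\theta$ is disclosed at time $t+$, notes that the stopper could then use $\tau_b$ in the bridge scenario (value $V_1^a(t,x)$) and either stop immediately ($a<0$) or wait until $t=1$ ($a>0$) in the Brownian-motion scenario (value $x+\max(a,0)$), and concludes because the stopping times in \eqref{eqn:prob2} form a subset of those available after revelation. Your ``supremum of a sum $\leq$ sum of suprema'' step is exactly the rigorous counterpart of that subset statement: optimizing each term separately is precisely what a stopper who observes $\theta$ can do. What your route adds is the formal scaffolding that the paper deploys only in its proof of the lower bound: the mixture decomposition $\E_\pi[X_{t+\tau}]=(1-\Pi_t)\E_0[X_{t+\tau}]+\Pi_t\E_1[X_{t+\tau}]$ (derived in the paper via the Radon--Nikodym identities of Proposition \ref{prop:measure}), together with the optional-sampling bound $\E_0[X_{t+\tau}]=x+a\E_0[\tau]\leq x+\max(a,0)$, valid uniformly over bounded stopping times. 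Your treatment of the Brownian-motion term is in fact slightly more careful than the paper's, which writes $\E_0[X_1]=x+a$ where the exact value started from $(t,x)$ is $x+a(1-t)\leq x+a$; either way the stated bound \eqref{eqn:upper} holds. In short, the paper's proof buys intuition and brevity; yours buys line-by-line verifiability and makes visible that the upper and lower bounds in Propositions \ref{prop:upperbound} and \ref{prop:lowerbound} are two sides of the same decomposition.
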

\begin{proof}
To establish the upper bound, we consider a situation in which the true nature of the process (i.e., $\theta$) was revealed to the optimal stopper immediately after starting, i.e.~at time $t+$.
In this situation, the optimal stopper would subsequently be able to employ the optimal stopping strategy for the problem given full knowledge of the nature of the underlying process.
Specifically, if the process was revealed as a Brownian bridge, then using $\tau_b$, as defined in \eqref{eqn:taustar}, would be optimal, generating an expected value (at $t=t+$) of $V_1^a(t,x)$.
On the other hand, if the process was revealed as a Brownian motion with drift $a$, then the optimal strategy would be different.
In the case where $a<0$, it would be optimal to stop immediately and receive the value $x$, and in the case where $a>0$ it would be optimal to wait until $t=1$ and receive the expected value $\E_0[X_1]=x+a$.
When $a=0$, however, any stopping rule would yield an expected value of $x$, due to the martingality of the process $X$ in this case.

Considering now the value function at $t=t-$.
Acknowledging that the true nature of the process will be immanently revealed, the expected payout is given by $(1-\Pi_t)(x+\max(a,0))+\Pi_t V_1^a(t,x)$, upon noting that $\Pi_t=\Pi(t,x,\pi)$ represents the current belief about the true value of $\theta$.
Finally, recognizing that the set of stopping times in \eqref{eqn:prob2} is a subset of the stopping times used in the situation described above (where $\theta$ is revealed at $t+$), the stated inequality is clear.
\end{proof}

%\vspace{4pt}
%
%\begin{remark}
%Since the optimal stopper can only employ one stopping rule at any given time, the upper bound in \eqref{eqn:upper} appears natural since the optimal stopper must attempt to `mix' the two optimal stopping strategies in each of %the cases (e.g., to mix $\tau=\tau_b$ with $\tau=1$ when $a>0$).
%Such a restriction can clearly only reduce the optimal stopping value, in comparison to a situation where conditional stopping rules could be used, as in the right-hand side of \eqref{eqn:upper}.
%\end{remark}

\vspace{4pt}

\begin{proposition}\label{prop:lowerbound} \emph{(Lower bound)}.
The value function defined in \eqref{eqn:prob2} satisfies
\begin{equation}\label{eqn:lower}
  V(t,x,\pi)\geq \bigl(1-\Pi(t,x,\pi)\bigr)\E_0[X_{(t+\tau_b)\wedge 1}]+\Pi(t,x,\pi)V^{a}_{1}(t,x),
\end{equation}
where $V_1^a$ is as given in \eqref{eqn:classical} and $\tau_b$ denotes the optimal strategy for the known pinning case described in \eqref{eqn:taustar}.
Moreover, the function $\Pi$ is the updated belief conditional on the process reaching $x$ at time $t$, defined in \eqref{eqn:Piembed}.
\end{proposition}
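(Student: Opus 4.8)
The plan is to establish the lower bound by exhibiting a single admissible stopping rule and evaluating its expected payoff, exploiting that the value function in \eqref{eqn:prob2} is a supremum over all stopping times and therefore dominates the value attained by any particular (generally suboptimal) choice. The natural candidate is the threshold rule $\tau_b$ from the known-pinning problem \eqref{eqn:taustar}, capped at the horizon, i.e.\ $\tau^\ast:=\tau_b\wedge(1-t)$; this is a hitting time of a deterministic boundary and hence an admissible $\mathcal{F}^X$-stopping time in $[0,1-t]$. Consequently,
\[
  V(t,x,\pi)\geq\E_\pi\bigl[X_{t+\tau^\ast}^{t,x}\bigr].
\]

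First I would decompose this expectation according to the unobserved indicator $\theta$. Conditioning on $\mathcal{F}_t^X$ and using that the posterior weight of $\{\theta=1\}$ at $(t,x)$ is precisely $\Pi(t,x,\pi)$ (by Proposition~\ref{prop:mapping}, or equivalently the change-of-measure identities of Proposition~\ref{prop:measure}), the law of total expectation gives
\[
  \E_\pi\bigl[X_{t+\tau^\ast}^{t,x}\bigr]=\bigl(1-\Pi(t,x,\pi)\bigr)\E_0\bigl[X_{t+\tau^\ast}^{t,x}\bigr]+\Pi(t,x,\pi)\,\E_1\bigl[X_{t+\tau^\ast}^{t,x}\bigr].
\]
The key point here is that, conditional on $\{\theta=0\}$, the dynamics \eqref{eqn:bridge} reduce to those of a Brownian motion with drift $a$ started at $(t,x)$, whereas conditional on $\{\theta=1\}$ they reduce to those of a Brownian bridge pinning to $a$ at $t=1$, again started at $(t,x)$; the $\Pi$-dependent drift appearing in \eqref{eqn:Xembed} is merely the filtered representation in the observed filtration and disappears once we condition on the true regime.

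Next I would evaluate the two terms separately. On $\{\theta=1\}$ the process is exactly the Brownian bridge for which $\tau_b$ is, by construction, the optimal stopping rule, so the explicit solution \eqref{eqn:classical} yields $\E_1[X_{t+\tau^\ast}^{t,x}]=V_1^a(t,x)$; here the cap at $1-t$ is immaterial, since under $\P_1$ the boundary $b$ is reached no later than $t=1$, where $b(1)=a=X_1$. On $\{\theta=0\}$ the rule $\tau_b$ is merely a fixed admissible strategy for a Brownian motion with drift $a$, and its value is retained in the form $\E_0[X_{(t+\tau_b)\wedge1}^{t,x}]$. Substituting these two evaluations into the decomposition returns precisely the claimed inequality \eqref{eqn:lower}.

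I expect the only genuine subtlety to be the justification of the mixture decomposition---specifically, the assertion that conditioning on $\theta$ removes the $\Pi$-feedback from the drift and returns the two canonical processes weighted by the correct posterior probabilities. This is, however, a direct consequence of the construction \eqref{eqn:space}--\eqref{eqn:bridge} together with the identification of $\Pi(t,x,\pi)$ as the conditional probability of $\{\theta=1\}$ given that $X$ reaches $x$ at time $t$, which is exactly the content of Propositions~\ref{prop:mapping} and~\ref{prop:measure}. Everything else is either a defining property of $\tau_b$ or the elementary suboptimality bound.
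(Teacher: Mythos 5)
Your proposal is correct and follows essentially the same route as the paper: both lower-bound the supremum by the specific (generally suboptimal) rule $\tau_b\wedge(1-t)$, split the resulting expectation into $\P_0$- and $\P_1$-parts weighted by $1-\Pi(t,x,\pi)$ and $\Pi(t,x,\pi)$, and identify the $\P_1$-term with $V_1^a(t,x)$ via optimality of $\tau_b$ for the bridge (noting, as you do, that the cap at $1-t$ only binds under $\P_0$). The only cosmetic difference is that you obtain the mixture decomposition by conditioning on $\theta$ directly, whereas the paper derives it through the explicit changes of measure $\P_\pi\to\P_0\to\P_1$ using the Radon--Nikodym derivatives of Proposition~\ref{prop:measure}; the two justifications are equivalent.
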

\begin{proof}
The desired bound can be established by employing the optimal strategy for the known pinning case, defined in \eqref{eqn:taustar}, in the stopping problem in \eqref{eqn:prob2}, for $\pi<1$.
In detail, letting $X=X^{t,x}$ for ease of notation, we have
\begin{align}
  V(t,x,\pi) &= \sup_{0\leq\tau\leq 1-t}\E_{\pi}\bigl[X_{t+\tau}\bigr]=\sup_{0\leq\tau\leq 1-t}\bigl\{(1-\Pi_t)\E_0\bigl[\tfrac{X_{t+\tau}}{1-\Pi_{t+\tau}}\bigr]\bigr\} \nonumber \\
  &=\sup_{0\leq\tau\leq 1-t}\bigl\{(1-\Pi_t)\E_0[X_{t+\tau}\bigl(1+\tfrac{\Pi_{t+\tau}}{1-\Pi_{t+\tau}}\bigr)\bigr]\bigr\} \nonumber \\
  &=\sup_{0\leq\tau\leq 1-t}\bigl\{(1-\Pi_t)\E_0[X_{t+\tau}]+\Pi_t\E_1[X_{t+\tau}]\bigr\}, \nonumber
\end{align}
where we have applied the measure change from $\P_{\pi}$ to $\P_{0}$, via \eqref{eqn:change}, in the second equality, and the measure change from $\P_0$ to $\P_1$, via \eqref{eqn:change2}, in the last equality.
Furthermore, employing the stopping rule $\tau_b$ from \eqref{eqn:taustar} (which may or may not be optimal) yields
\begin{equation*}
  V(t,x,\pi) \geq (1-\Pi_t)\E_0[X_{(t+\tau_b)\wedge 1}]+\Pi_t\E_1[X_{t+\tau_b}]=(1-\Pi_t)\E_0[X_{(t+\tau_b)\wedge 1}]+\Pi_t V_1^a(t,x),
\end{equation*}
upon noting the definition of $V_1^a$, and where we have ensured that stopping under $\P_0$ happens at or before $t=1$ (since the boundary $b$ is not guaranteed to be hit by a Brownian motion with drift, unlike the Brownian bridge).
\end{proof}

Computation of $\E_0[X_{(t+\tau_b)\wedge 1}]$ is difficult in general, being the expected hitting level of a Brownian motion with drift to a square-root boundary.
Alternatively, we have $\E_0[X_{(t+\tau_b)\wedge 1}]=x+a\E_0[\tau_b\wedge(1-t)]+\E_0[B_{(t+\tau_b)\wedge 1}]=x+a\E_0[\tau_b\wedge(1-t)]$, with the first-passage time $\tau_b=\inf\{s\geq 0\,|\,B_s\geq c(s)\}$, where $c(s):=a(1-s)-x+\alpha\sqrt{1-t-s}$.
Hence, the computation reduces to the problem of finding the mean first-passage time of a driftless Brownian motion (started at zero) to a time-dependent boundary (which is a mixture of a linear and square-root function).
While no explicit expression for $\E_0[\tau_b\wedge(1-t)]$ exists, there are numerous numerical approximations available---see, for example, \cite{DW}, or more recently \cite{HT}. %\marginal{Find reference for mean first-passage times.}
When $a=0$, it is clear that $\E_0[X_{(t+\tau_b)\wedge 1}]=x$, a result which we will exploit below.

\vspace{4pt}

2. Given Propositions \ref{prop:upperbound} and \ref{prop:lowerbound}, the following result is evident, and constitutes the main result of this note.
\begin{theorem}\label{thm:solution}
When $a=0$, the value function in \eqref{eqn:prob2} is given by
\begin{equation}\label{eqn:solution}
  V(t,x,\pi)=\bigl(1-\Pi(t,x,\pi)\bigr)x+\Pi(t,x,\pi)V^0_1(t,x),\quad \textrm{for }\pi\in[0,1],
\end{equation}
where $\Pi$ is defined in \eqref{eqn:Piembed} and $V^0_1$ is defined in \eqref{eqn:classical} (upon setting $a=0$).
Further, the optimal stopping strategy in \eqref{eqn:prob2} is given by $\tau^*=\tau_b\wedge(1-t)$.
This stopping strategy is the same for all $\pi\in[0,1]$.
\end{theorem}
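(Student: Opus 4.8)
The plan is to show that when $a=0$ the upper bound of Proposition~\ref{prop:upperbound} and the lower bound of Proposition~\ref{prop:lowerbound} collapse onto the same expression, thereby pinning down $V$ exactly. This is the cleanest route, since it sidesteps solving a free-boundary problem directly and instead exploits the two bounds already established.

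First I would specialise the upper bound. Setting $a=0$ in \eqref{eqn:upper} makes $\max(a,0)=0$, so the right-hand side becomes $(1-\Pi(t,x,\pi))x+\Pi(t,x,\pi)V_1^0(t,x)$, which is precisely \eqref{eqn:solution}. Next I would specialise the lower bound. The only term in \eqref{eqn:lower} that differs from \eqref{eqn:upper} is $\E_0[X_{(t+\tau_b)\wedge 1}]$ in place of $x$. Under $\P_0$ with $a=0$ the process $X$ is a standard (driftless) Brownian motion started at $x$, hence a martingale; since $\tau_b\wedge(1-t)$ is a bounded stopping time, optional stopping gives $\E_0[X_{(t+\tau_b)\wedge 1}]=x$, as already noted just before the theorem. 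Thus the lower bound also equals $(1-\Pi)x+\Pi V_1^0$, the two bounds coincide, and \eqref{eqn:solution} follows.

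For the optimal strategy I would observe that the lower bound in Proposition~\ref{prop:lowerbound} is attained precisely by employing $\tau_b\wedge(1-t)$; since this value now coincides with $V$, the strategy $\tau^*=\tau_b\wedge(1-t)$ is optimal. To see that it does not depend on $\pi$, note that with $a=0$ the boundary reduces to $b(t)=\alpha\sqrt{1-t}$, so $\tau_b=\inf\{s\geq0\,|\,X_{t+s}\geq\alpha\sqrt{1-t-s}\}$ is a functional of the observed path of $X$ and a deterministic boundary alone—the belief $\pi$ enters neither the boundary nor the rule. Hence $\tau^*$ is identical for every $\pi\in(0,1]$, matching the classical $\pi=1$ rule of \eqref{eqn:taustar}.

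I expect the only genuine subtlety to be the justification that $\E_0[X_{(t+\tau_b)\wedge 1}]=x$: one must be careful that the martingale property being invoked is that of $X$ under $\P_0$ (not under the operative measure $\P_\pi$, under which $X$ has nonzero drift whenever $\Pi_t\neq0$), and that the truncation at $1-t$ is essential so that optional stopping applies to a \emph{bounded} stopping time with the requisite integrability on the bounded interval $[t,1]$. Everything else is a direct read-off of the $a=0$ specialisations of the two already-proven bounds.
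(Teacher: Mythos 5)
Your proposal is correct and follows essentially the same route as the paper: observe that when $a=0$ the upper bound of Proposition~\ref{prop:upperbound} (since $\max(a,0)=0$) and the lower bound of Proposition~\ref{prop:lowerbound} (since $\E_0[X_{(t+\tau_b)\wedge 1}]=x$ by the $\P_0$-martingale property of $X$ and optional stopping at the bounded time $\tau_b\wedge(1-t)$) coincide with \eqref{eqn:solution}, whence optimality of $\tau^*=\tau_b\wedge(1-t)$ follows because it attains the lower bound. Your added care about the truncation at $1-t$ and about which measure carries the martingale property matches the paper's own remarks, so nothing further is needed.
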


\begin{proof}
The result is evident given the fact that the upper bound defined in \eqref{eqn:upper} and the lower bound defined in \eqref{eqn:lower} coincide when $a=0$.
Specifically, we observe that $\E_0[X_{(t+\tau_b)\wedge 1}]=x$ in \eqref{eqn:lower} since $X$ is a $\P_0$-martingale when $a=0$.
Moreover, since the process is not guaranteed to pin at $t=1$, we specify explicitly that the stopper must stop at $t=1$ should the boundary $b$ not be hit.
\end{proof}
Note that the optimality of the solution presented in \eqref{eqn:solution} does not need to be verified since it follows directly from the proven identity in \eqref{eqn:solution} and the existing verification arguments establishing the optimality of $V_1^0$ (provided in \cite{EW}, for example).

The equality found in \eqref{eqn:solution} also demonstrates that (when $a=0$) there is no loss in value due to the optimal stopper using a sub-optimal stopping strategy for the `true' drift. The optimal stopping strategy for a Brownian bridge also achieves the maximum possible value for the Brownian motion (due to martingality), hence using $\tau^*$ will achieve the maximum possible value regardless of the true nature of the underlying process.\footnote{For $a\neq 0$, it would not be possible to achieve the maximum value in both drift scenarios simultaneously through a single optimal stopping rule. Hence there would be loss in value due to this, as indicated by the inequality in Proposition \ref{prop:upperbound}.}

\vspace{4pt}

\begin{remark}
It is also worth noting that the arguments in the proof of Theorem \ref{thm:solution} would carry over to a more general setting in which the process is believed to be either a martingale $M$ or a diffusion $X$ (with an initial probability $\pi$ of being $X$).
In this case, similar arguments to Proposition \ref{prop:upperbound} will show that $V(t,x,\pi)\leq(1-\Pi_t)x+\Pi_tV_1(t,x)$, where $V_1$ denotes the solution to the associated stopping problem for the diffusion $X$.
Under $\P_0$, all stopping rules generate the expected value of $x$, due to $M$ being a $\P_0$-martingale.
Moreover, similar arguments to Proposition \ref{prop:lowerbound} will show that $V(t,x,\pi)\geq(1-\Pi_t)x+\Pi_t V_1(t,x)$, upon using the optimal strategy for the optimal stopping problem under $\P_1$, and noting again that $\E_0[X_{t+\tau}]=x$, for any stopping rule.
Finally, we must note that the function $\Pi_t$ would need to be found on a case-by-case basis via a mapping similar to \eqref{eqn:link}.
In general, however, this mapping could also include path-dependent functionals of the process over $[0,t]$, in addition to the values of $t$ and $x$ (cf.~\cite[][Proposition 4]{JP1}).
%As an example, if we modified the problem in \eqref{eqn:prob2} to assume that the value of $\alpha$ was either $\alpha>0$ (not necessarily equal to one) with probability $\pi$, or zero with probability $1-\pi$, then the solution in \eqref{eqn:solution} would hold with $V_1$ given by the solution to the optimal stopping problem found in \cite[][Theorem 1]{G-1} for an arbitrary $\alpha$.
%Furthermore, the mapping between $X$ and $\Pi$ in this case can be found by letting $U_t=\ln\bigl(\Pi_t/(1-\Pi_t)\bigr)+\alpha X^2/[2(1-t)]$ in the proof of Proposition \ref{prop:mapping}, yielding
%\begin{equation}
%  \frac{\Pi_t}{1-\Pi_t}=\frac{\pi e^{\tfrac{\alpha\kappa^2}{2}}}{1-\pi}\frac{1}{\sqrt{(1-t)^{\alpha}}}\exp\left(-\frac{\alpha X_t^2}{2(1-t)}-\frac{1}{2}\alpha(\alpha-1)\int_0^t\left(\frac{X_s}{1-s}\right)^2 ds\right).
%\end{equation}
%In this sense the optimal stopping value inherently has two spacial dimensions ($x$ and the integral above) in addition to its time dependence.
%Despite this, it is quite remarkable that the problem is completely solvable in closed form.
\end{remark}

\vspace{4pt}

3. Next, Theorem \ref{thm:solution} also implies the following result.

\begin{corollary}\label{corr:inequal}
When $a=0$, we have $x\leq V(t,x,\pi)\leq V_1^0(t,x)$ and $\pi\mapsto V(t,x,\pi)$ is increasing, with $V(t,x,0)=x$ and $V(t,x,1)=V_1^0(t,x)$.
\end{corollary}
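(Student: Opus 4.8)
The plan is to read off every assertion directly from the closed-form representation \eqref{eqn:solution} established in Theorem \ref{thm:solution}. Writing $\Pi=\Pi(t,x,\pi)$ for brevity and rearranging, I would first record that \eqref{eqn:solution} can be expressed as
\begin{equation*}
  V(t,x,\pi)=x+\Pi\bigl(V_1^0(t,x)-x\bigr),
\end{equation*}
which exhibits $V$ as a convex combination of $x$ and $V_1^0(t,x)$ with weight $\Pi\in[0,1]$. Thus the entire corollary reduces to two elementary facts about the two ingredients in this expression: that $V_1^0(t,x)\geq x$, and that the belief map $\pi\mapsto\Pi(t,x,\pi)$ is increasing.

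For the inequalities, I would argue that $V_1^0(t,x)\geq x$ because stopping immediately is an admissible strategy in the classical problem whose value is $V_1^0$, so the supremum defining $V_1^0$ is at least $x$ (alternatively, this is immediate from \eqref{eqn:classical}, where the correction term is nonnegative since $1-\alpha^2>0$ and $\Phi\geq0$). Granted $V_1^0(t,x)\geq x$ and $\Pi\in[0,1]$, the convex-combination structure gives at once
\begin{equation*}
  x=(1-\Pi)x+\Pi x\leq(1-\Pi)x+\Pi V_1^0(t,x)=V(t,x,\pi)\leq(1-\Pi)V_1^0(t,x)+\Pi V_1^0(t,x)=V_1^0(t,x),
\end{equation*}
establishing $x\leq V(t,x,\pi)\leq V_1^0(t,x)$ for every $\pi\in[0,1]$.

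For the monotonicity and the endpoint values, I would work from the explicit form of $\Pi$ in \eqref{eqn:Piembed}. Setting $\ell:=L^0(t,x)>0$ (strictly positive by \eqref{eqn:link} with $a=0$) and $r:=\pi/(1-\pi)$, one has $\Pi=r\ell/(1+r\ell)$. Since $r\mapsto r$ is increasing on $[0,1)$ and $u\mapsto u/(1+u)$ is increasing on $[0,\infty)$, the composition shows $\Pi$ is increasing in $\pi$; because $V_1^0(t,x)-x\geq0$ by the previous paragraph, the displayed rearrangement $V=x+\Pi(V_1^0-x)$ is then increasing in $\pi$ as well. Finally, substituting $\pi=0$ gives $r=0$, hence $\Pi=0$ and $V(t,x,0)=x$, while $\pi=1$ gives $\Pi=1$ and $V(t,x,1)=V_1^0(t,x)$.

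I do not anticipate a genuine obstacle here, since everything is a direct consequence of the representation \eqref{eqn:solution}; the only points requiring a moment's care are the verification that $V_1^0(t,x)\geq x$ and the monotonicity of the posterior belief $\Pi$ in the prior $\pi$, both of which are elementary once the convex-combination structure is made explicit.
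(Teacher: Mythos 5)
Your proof is correct and takes essentially the same approach as the paper: both read every claim directly off the representation \eqref{eqn:solution} together with $V_1^0\geq x$ and the explicit formula for $\Pi$ in \eqref{eqn:Piembed}, the only (immaterial) difference being that you obtain monotonicity in $\pi$ by composing increasing maps whereas the paper differentiates $V$ in $\pi$ explicitly. One small caution: your parenthetical alternative justification of $V_1^0(t,x)\geq x$ (nonnegativity of the correction term in \eqref{eqn:classical} with $a=0$) only yields $V_1^0\geq 0$, which is insufficient when $0<x<b(t)$; your primary argument---that stopping immediately is admissible in the problem whose value is $V_1^0$---is the valid one and is all that is needed.
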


\begin{proof}
From \eqref{eqn:solution} we have that $V-V_1^0=(1-\Pi)(x-V_1^0)\leq 0$ where the inequality is due to the fact that $\Pi\leq 1$ and $V_1^0\geq x$, from \eqref{eqn:classical}.
Direct differentiation of \eqref{eqn:solution}, upon noting \eqref{eqn:Piembed}, also shows that $\tfrac{\partial V}{\partial\pi}=L^0(V_1^0-x)/[(1-\pi)(1+\tfrac{\pi}{1-\pi}L^0)]^2\geq 0$, proving the second claim.
\end{proof}

\vspace{4pt}

Corollary \ref{corr:inequal} reveals that, while the optimal stopping strategy is the same with pinning certainty or uncertainty when $a=0$, the value function with uncertainty is \emph{lower} than that if the pinning was certain/known.
In other words, when sampling from a balanced urn with uncertainty about replacement, the optimal stopping strategy is the same as with replacement, but the expected payout is lower.
To illustrate this, Figure \ref{fig:uncertain} plots the value function $V$ in \eqref{eqn:solution} in comparison to $V_1^0$ as defined in \eqref{eqn:classical}.
We confirm that a larger $\pi$ (hence a stronger belief that the process is indeed a Brownian bridge) corresponds to a larger value of $V$.

\begin{figure}[htp!]\centering
  %\psfrag{x}{$x$}
  %\psfrag{V}{$V$}
  \includegraphics[width=0.5\textwidth]{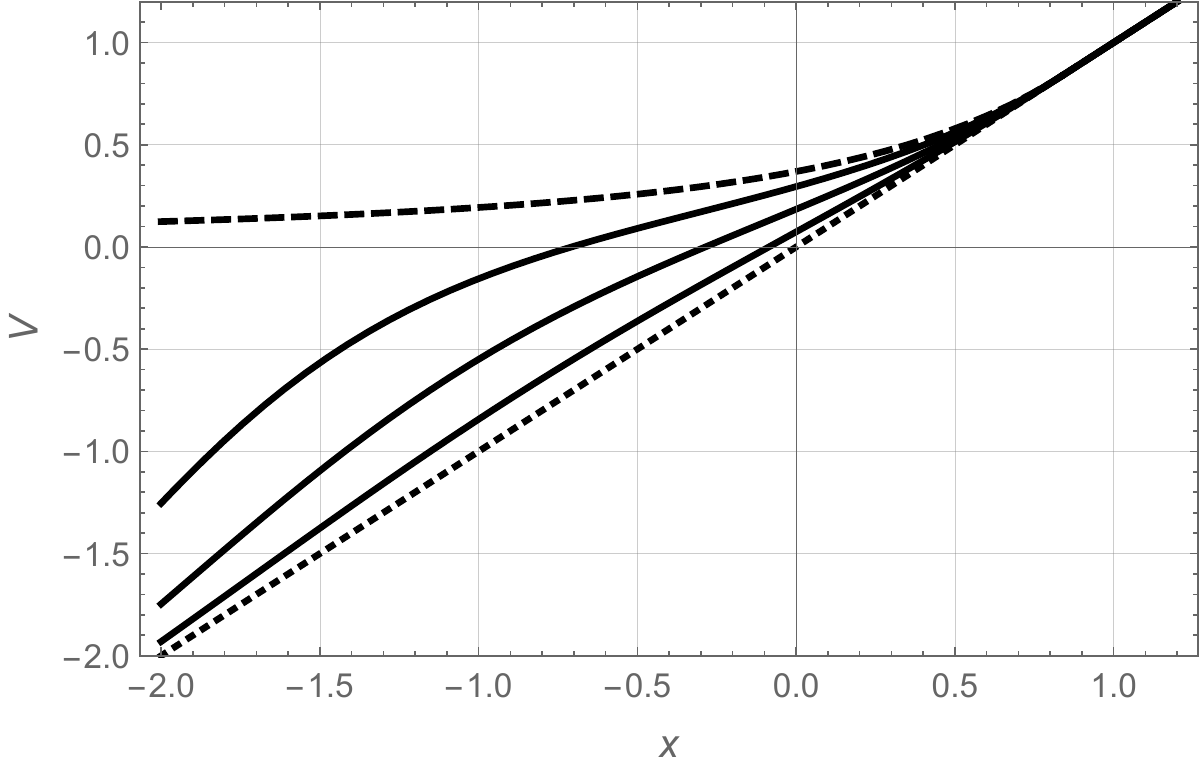}
  \caption{The solution to the problem in \eqref{eqn:prob2} when the process is believed to be a Brownian bridge (pinning to $a=0$) with probability $\pi$ or a (driftless) Brownian motion with probability $1-\pi$. Solid lines = $V(0,x,\pi)$ from \eqref{eqn:solution} for $\pi=\{$0.1, 0.5, 0.9\} (higher lines correspond to larger $\pi$); dashed line = $V_1^0(0,x)$ from \eqref{eqn:classical}; and dotted line = $x$.} \label{fig:uncertain}
\end{figure}

\vspace{4pt}

Figure \ref{fig:uncertain} also highlights the fact that the value function in \eqref{eqn:prob2} can be negative, since pinning to zero is not guaranteed (and hence stopping at $t=1$ does not guarantee a minimum payoff of zero).
For example, if $\pi=0.5$ (i.e., sampling with or without replacement were both initially thought to be equally likely), then the value function in \eqref{eqn:solution} would be negative for all $x<-0.286$.
This does not mean, however, that it would be optimal to stop once the running payoff drops below this value, since an immediate negative payoff would be received, compared to the zero expected payoff from continuing and stopping according to $\tau^*$.
%For the case when $a\neq 0$, however, we observe in the next section that it may in fact be optimal to stop when the running payoff becomes too small, in order to prevent future losses.

\section{The case where $a$ is nonzero}\label{sec:con}

1. If the urn is not balanced, meaning that $m\neq p$, then a nonzero drift and a nonzero pinning point are introduced into the process $X$.
This asymmetry complicates the problem considerably and, while the bounds in \eqref{eqn:upper} and \eqref{eqn:lower} are still valid, a closed-form solution to \eqref{eqn:prob2} is no longer available.
Attempting to provide a detailed analytical investigation of this case is beyond the scope of this note.
However, numerical investigation of the variational inequality associated with \eqref{eqn:prob2} suggests that a rich solution structure emerges, particularly in the case where $a>0$, when multiple stopping boundaries can arise.
We therefore conclude this note by exposing some of this structure to pique the reader's interest.

\vspace{4pt}

\begin{remark}
It should be noted that if the drift of the Brownian motion was zero, but the Brownian bridge had a nonzero pinning level, then the results of Theorem \ref{thm:solution} would still hold (due to the martingality of $X$ under $\P_0$).
However, this situation does not correspond to the urn problem described in Section \ref{sec:connect}, in which both the drift and the pinning point must be the same.
\end{remark}

\vspace{4pt}

2. To shed some light on the optimal stopping strategy for a nonzero $a$, it is useful to reformulate the problem in \eqref{eqn:prob2} under the measure $\P_0$ as follows:
\begin{align}
  V(t,x,\pi) &= \sup_{0\leq\tau\leq 1-t}\E_{\pi}\bigl[X_{t+\tau}\bigr]=(1-\Pi_t)\sup_{0\leq\tau\leq 1-t}\E_0\bigl[\tfrac{X_{t+\tau}}{1-\Pi_{t+\tau}}\bigr] \nonumber \\
  &=\bigl(1-\Pi(t,x,\pi)\bigr)\sup_{0\leq\tau\leq 1-t}\E_0\bigl[X_{t+\tau}\bigl(1+\tfrac{\pi}{1-\pi}L^a(t+\tau,X_{t+\tau})\bigr)\bigr] \nonumber \\
  &=\bigl(1-\Pi(t,x,\pi)\bigr)\sup_{0\leq\tau\leq 1-t}\E_0[G^{\pi,a}(t+\tau,X_{t+\tau})] \nonumber \\
  &=\bigl(1-\Pi(t,x,\pi)\bigr)\widetilde{V}^{\pi,a}(t,x), \nonumber
\end{align}
where we have used \eqref{eqn:change} in the second equality (to change measure) and the mapping from \eqref{eqn:link} in the third equality (to eliminate $\Pi_{t+\tau}$). We have also defined the auxiliary optimal stoping problem
\begin{equation}\label{eqn:prob3}
  \widetilde{V}^{\pi,a}(t,x):=\sup_{0\leq\tau\leq 1-t}\E_0[G^{\pi,a}(t+\tau,X_{t+\tau})],
\end{equation}
and the payoff function
\begin{equation}\label{eqn:Gpi}
  G^{\pi,a}(t,x):=x\bigl(1+\tfrac{\pi}{1-\pi}L^a(t,x)\bigr),
\end{equation}
where $L^a$ is given in \eqref{eqn:link}, which importantly is dependent on the parameter $a$.

\vspace{4pt}

3. Next, defining the infinitesimal generator associated with $X$ as $\mathbb{L}_X:=\tfrac{1}{2}\tfrac{\partial^2}{\partial x^2}+a\tfrac{\partial}{\partial x}$, then It\^{o}'s formula and an application of the optional sampling theorem for any given $\tau$ yield
\begin{equation}\label{eqn:lagrange}
  \E_0[G^{\pi,a}(t+\tau,X_{t+\tau})]=G^{\pi,a}(t,x)+\E_0\int_0^{\tau}H(t+s,X_{t+s})ds,
\end{equation}
where
\begin{equation}\label{eqn:H}
  H(t,x):=\bigl(\tfrac{\partial}{\partial t}+\mathbb{L}_X\bigr)G^{\pi,a}(t,x)=a-\frac{\pi(x-a)}{(1-\pi)(1-t)}L^a(t,x).
\end{equation}
Hence, from \eqref{eqn:lagrange} it is clear that it would never be optimal to stop at a point $(t,x)$ for which $H(t,x)>0$.
For $a=0$, this region corresponds to $x<0$.
However, the shape of this region is qualitatively different for nonzero $a$.
To illustrate this, Figure \ref{fig:H} plots the behaviour of $H$ for both positive and negative values of $a$.

%\begin{figure}[htp!]\centering
%  \psfrag{H>0}{$H>0$}
%  \psfrag{H<0}{$H<0$}
%  \includegraphics[width=0.45\textwidth]{pluscont.eps}
%  \includegraphics[width=0.45\textwidth]{negcont.eps}
%  \caption{The regions in which the function $H$ from \eqref{eqn:H} is positive and negative. The x-axis corresponds to $t$ and the y-axis to $x$. On the left we have $a=0.5>0$ and on the right $a=-0.5<0$. The three different contours correspond to differing values of the initial belief parameter $\pi$, with $\pi=0.35$ (solid line), $\pi=0.50$ (dashed line), and $\pi=0.65$ (dotted line).} \label{fig:H}
%\end{figure}

\begin{figure}[htp!]\centering
  %\psfrag{x}{$x$}
  %\psfrag{H}{$H$}
  \includegraphics[width=0.45\textwidth]{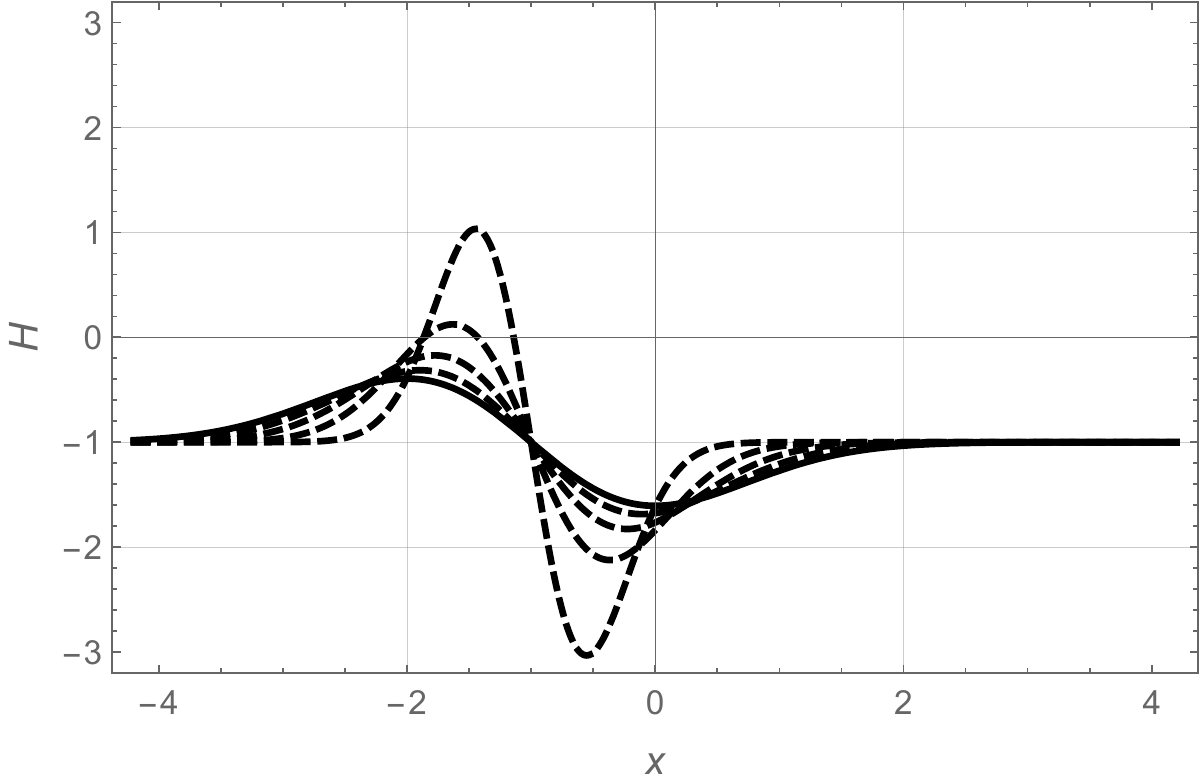}\hspace{3pt}
  \includegraphics[width=0.45\textwidth]{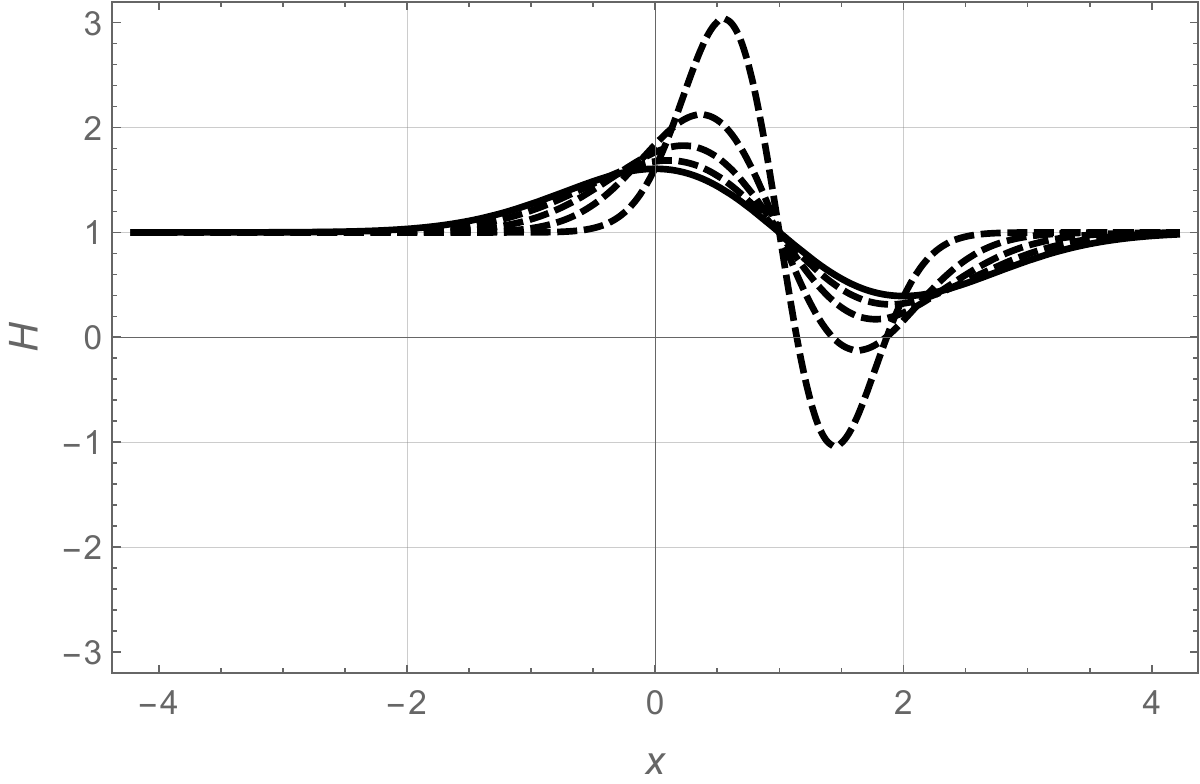}
  \caption{The behaviour of the function $H$ (for $\pi=0.5$ and at various times) for $a=-1$ (on the left) and $a=1$ (on the right). The solid line represents the value at $t=0$ and the dashed lines represent it at $t=\{$0.2, 0.4, 0.6, 0.8\}.} \label{fig:H}
\end{figure}

Considering the case where $a<0$, Figure \ref{fig:H} reveals that $H$ is strictly negative for all $x$ before some critical time (calculated to be 0.536 for the $a=-1$ example).
Furthermore, when the function does become positive, it only does so in a rather narrow interval (below $a$).
This suggests that the incentive to stop is rather strong when $a<0$, as one might expect.
However, little more can be gleaned from the function $H$ in this case.
For $a>0$, however, the function $H$ is more informative about the optimal stopping strategy.
Here, we find that $H$ is strictly positive for all $x$ before some critical time (again found to be 0.536 for $a=1$).
This indicates that when $a>0$, it would never be optimal to stop before this critical time.
Moreover, since $\lim_{x\to\infty}H(t,x)=a$, we also observe that any stopping region must be contained in a finite interval (above $a$).
This suggests the existence of a disjoint continuation region and the presence of two separate optimal stopping boundaries.
Indeed, these predictions are confirmed numerically below.
This richer structure is also consistent with the results in \cite{EV2}, whose authors found similar disjoint continuation regions in a situation where the location of the pinning point of a Brownian bridge was uncertain.

\vspace{4pt}

4. To investigate the solution to \eqref{eqn:prob3}, and hence \eqref{eqn:prob2}, numerically, we employ finite difference techniques applied to an associated variational inequality. The connection between optimal stopping problems and variational inequalities has long been established (see, for example, \citep[][Section 10.4]{O}). Specifically, it can be seen that a candidate solution to \eqref{eqn:prob3} can be obtained by solving the following variational inequality, expressed as a linear complementarity problem (see \cite[][Section 2.5.5]{ZWCS} for a general formulation)
\begin{equation}\label{eqn:LCP}
  \left\{\begin{array}{ll}
  &\Bigl[\tfrac{\partial\widetilde{V}^{\pi,a}}{\partial t}(t,x)+\mathbb{L}_X\widetilde{V}^{\pi,a}(t,x)\Bigr]\Bigl[\widetilde{V}^{\pi,a}(t,x)-G^{\pi,a}(t,x)\Bigr]=0 \\
  &\tfrac{\partial\widetilde{V}^{\pi,a}}{\partial t}(t,x)+\mathbb{L}_X\widetilde{V}^{\pi,a}(t,x) \leq 0 \\
  &\widetilde{V}^{\pi,a}(t,x)-G^{\pi,a}(t,x)\geq 0 \\
  &\widetilde{V}^{\pi,a}(1,x)=G^{\pi,a}(1,x).
  \end{array}\right.
\end{equation}

We have stated the problem as a linear complementarity problem (as opposed to a free-boundary problem with smooth-pasting applied at the unknown boundaries), since the structure of the continuation and stopping regions in $(t,x)$ is not known a priori for \eqref{eqn:prob3}.
As can be seen from \eqref{eqn:LCP}, the location of the optimal stopping boundaries do not appear explicitly in the problem formulation, instead being implicitly defined by the condition $\widetilde{V}^{\pi,a}\geq G^{\pi,a}(t,x)$. Once problem \eqref{eqn:LCP} has been solved (or numerically approximated), the optimal stopping boundaries can simply be read off from the solution by identifying where the function $\widetilde{V}^{\pi,a}-G^{\pi,a}$ switches from being positive to zero. The implicit treatment of the optimal stopping boundaries in \eqref{eqn:LCP} means that any complex structure of the continuation and stopping regions will be revealed as part of the solution. Indeed, this is exactly what we see below when $a>0$, where multiple stopping boundaries are identified.

To approximate the solution to \eqref{eqn:LCP} numerically, we discretise the problem using the Crank-Nicolson differencing scheme and then solve the resulting system of equations using the Projected Successive Over Relaxation (PSOR) algorithm. A more detailed description of the PSOR method, along with proofs of convergence, can be found in \cite{Cr}.\footnote{More details about the discretisation and implementation of the algorithm can also be made available from the author upon request.} Figure \ref{fig:bound} shows the optimal stopping boundaries obtained from our numerical procedure for various values of $a$ (both negative and positive).

\begin{figure}[htp!]\centering
  %\psfrag{t}{$t$}
  %\psfrag{x}{$x$}
  %\psfrag{a=-0.5}{$a=-0.5$}
  %\psfrag{a=-1}{$a=-1.0$}
  %\psfrag{a=-2}{$a=-2.0$}
  \includegraphics[width=0.45\textwidth]{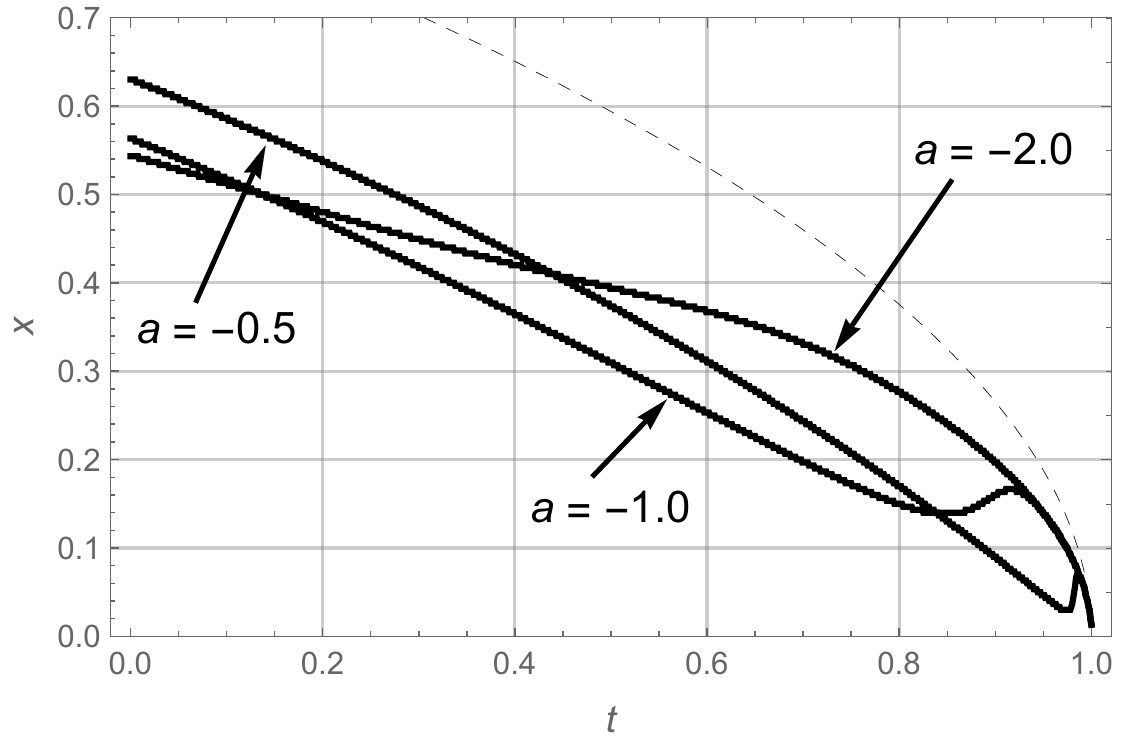}\hspace{3pt}
  \includegraphics[width=0.45\textwidth]{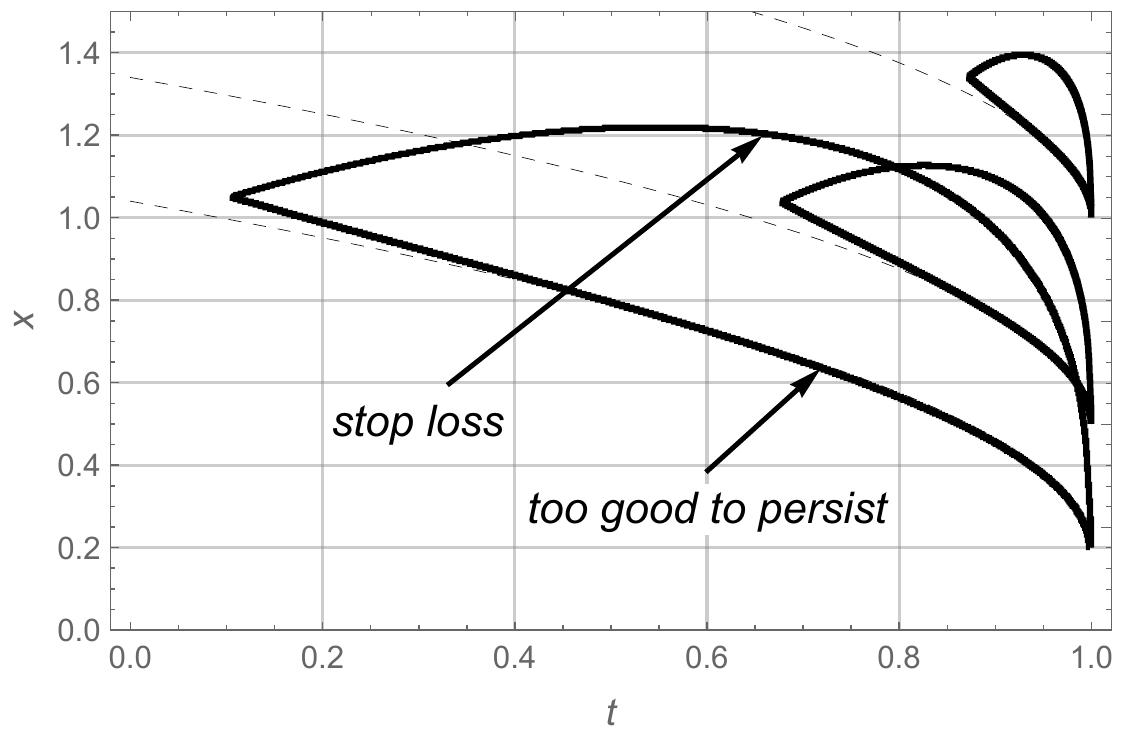}
  \caption{The optimal stopping boundaries (found numerically) for various $a$. On the left: $a=\{$$-$2.0, $-$1.0, $-$0.5\} (solid lines) and $\alpha\sqrt{1-t}$ (dashed line). On the right: $a=\{$0.2, 0.5, 1.0\} (solid lines) and $a+\alpha\sqrt{1-t}$ (dashed lines).} \label{fig:bound}
\end{figure}

Let us first discuss the case where $a>0$.
As predicted, Figure \ref{fig:bound} reveals that it would never be optimal to stop before some critical time (for large enough $a$ or small enough $\pi$ at least).
Recalling that the optimal strategy for a Brownian motion with positive drift is to wait until $t=1$, it would appear that waiting to learn more about the true nature of the process is optimal (at least initially).
In addition, beyond some critical time, we observe two disjoint continuation regions.
Indicating that, depending on the sample path experienced, it can be optimal to stop either after an increase in $X$ (i.e., after a $p$-ball has been drawn) or after a decrease in $X$ (i.e., after an $m$-ball has been drawn).
Based on the terminology introduced in \cite{EV2}, we can interpret the former boundary as a \emph{too-good-to-persist} boundary and the latter as a \emph{stop-loss} boundary.
The emergence of an endogenous stop-loss boundary in the optimal stopping strategy is a unique feature of the problem with uncertain pinning.
Finally, we also observe that both stopping boundaries lie \emph{above} the corresponding boundary if pinning was certain (given by $a+\alpha\sqrt{1-t}$).
Indicating that when $a>0$, stopping will happen later in the presence of pinning uncertainty.

For the case where $a<0$, we have the following remarks.
Firstly, numerical investigations suggest that it is never optimal to stop when $x<0$, despite the negative drift.
%This is possibly due to the fact that the Browninan bridge is also expected to decrease just as fast.
Secondly, the optimal stopping strategy appears to be of the form $\tau=\inf\{s\geq 0\,|\,X_{t+s}\geq \widehat{b}(t+s)\}\wedge(1-t)$ for some time-dependent boundary $\widehat{b}$.
Further, $\widehat{b}(t)$ appears to converge to zero at $t=1$, although it does not do so monotonically for all parameters.
Moreover, the boundary itself is not monotone in the parameter $a$, i.e.~$a\mapsto\widehat{b}(t)$ is not monotone.
This behaviour is most likely due to the differing effects of $a$ on the linear drift of the Brownian motion and the pinning behaviour of the Brownian bridge.

Due to the existence of multiple stopping boundaries, and their observed non-monotonic behaviour, further analytical investigation of the problem for $a\neq 0$ would be challenging and is left for the subject of future research.

\end{document}